\numberwithin{equation}{section}
\newcommand{\RR}{\mathbb{R}}
\newcommand{\eps}{\varepsilon}
\newcommand{\tubes}{\mathbb{T}}
\newtheorem{thm}{Theorem}[section]
\newtheorem{lem}[thm]{Lemma}
\newtheorem{cor}[thm]{Corollary}
\newtheorem{conj}[thm]{Conjecture}
\theoremstyle{remark}
\newcommand{\myitem}[1]{%
\item[#1.]\protected@edef\@currentlabel{#1}%
}
\begin{document}
\pagenumbering{arabic}
\title{Unions of lines in $\mathbb{R}^n$}

\author{Joshua Zahl\thanks{University of British Columbia, Vancouver BC, supported by an NSERC Discovery grant, jzahl@math.ubc.ca.
}}

\maketitle

\begin{abstract}
\noindent We prove a conjecture of D.~Oberlin on the dimension of unions of lines in $\RR^n$. If $d\geq 1$ is an integer, $0\leq\beta\leq 1$, and $L$ is a set of lines in $\RR^n$ with Hausdorff dimension at least $2(d-1)+\beta$, then the union of the lines in $L$ has Hausdorff dimension at least $d + \beta$. Our proof combines a refined version of the multilinear Kakeya theorem by Carbery and Valdimarsson with the multilinear $\to$ linear argument of Bourgain and Guth.
\end{abstract}

\section{Introduction}\label{introSection}
This paper concerns the following conjecture of D.~Oberlin. In what follows, $\dim$ will always refer to Hausdorff dimension.
\begin{conj}\label{mainConj}
Let $1\leq d\leq n$ be integers and let $\beta\in [0,1]$. Let $L$ be a set of lines in $\RR^n$, with $\dim L \geq 2(d-1)+\beta$. Then
\begin{equation}\label{dimensionBound}
\dim \bigcup_{\ell \in L} \ell \geq d+\beta.
\end{equation}
\end{conj}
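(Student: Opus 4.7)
The plan is to reduce \eqref{dimensionBound} to a discretized maximal-function statement at scale $\delta$ and then prove that statement by combining the Carbery--Valdimarsson refined multilinear Kakeya inequality with a Bourgain--Guth multilinear-to-linear recursion.

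For the discretization, I would pigeonhole $L$ to extract a $\delta$-separated family $\tubes$ of $\delta$-tubes, each containing a distinct line from (a positive-dimensional subset of) $L$, with cardinality $|\tubes|\gtrsim \delta^{-2(d-1)-\beta+o(1)}$. A standard covering-number argument then reduces \eqref{dimensionBound} to showing, for every $\eps>0$, the volume lower bound
\[
\Bigl|\bigcup_{T\in\tubes} T\Bigr|\geq \delta^{(n-d-\beta)+\eps}
\]
for all sufficiently small $\delta$.

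The heart of the argument would be a broad/narrow dichotomy in the Bourgain--Guth style. Fix a large auxiliary constant $K$, partition $\RR^n$ into $K$-balls, and on each ball $B$ split the tubes meeting $B$ into a \emph{broad} part, whose direction vectors are quantitatively non-degenerate upon taking their $(d{+}1)$-fold exterior product (the Carbery--Valdimarsson transversality condition), and a \emph{narrow} part confined to a $K\delta$-neighbourhood of some $d$-plane $V_B$. On the broad part I would invoke the Carbery--Valdimarsson inequality
\[
\int \Bigl(\prod_{j=1}^{d+1}\sum_{T\in\tubes_j}\chi_T\Bigr)^{1/d} \leq C_\eps\,\delta^{-\eps}\prod_{j=1}^{d+1}|\tubes_j|^{1/d},
\]
which, after H\"older and an $L^1$--$L^\infty$ rearrangement, produces the measure lower bound matching dimension $d+\beta$. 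For the narrow part, the lines whose tubes lie near $V_B$ give, after rescaling, a configuration of lines in $\RR^d$ to which the conjecture applies by induction on the ambient dimension $n$, yielding in $V_B$ a subset of Hausdorff dimension at least $d+\beta$. Summing broad and narrow contributions across $K$-balls and recursing on progressively smaller scales---stopping after $O_\eps(1)$ steps once $K=K(\eps)$ is chosen---should absorb the $\delta^{-\eps}$ multilinear losses into the $o(1)$ exponent.

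The main obstacle, I expect, is verifying that the Carbery--Valdimarsson broadness condition is exactly the right companion to ``lines in a $d$-plane'': failure of broadness must force the relevant tubes into a neighbourhood of a genuine $d$-plane rather than some larger object, so that the narrow alternative legitimately recurses to the same conjecture in a lower ambient dimension. A secondary difficulty is choreographing the two-parameter induction, on $n$ and on scale, so that the $o(1)$ losses do not accumulate into a non-trivial exponent deficit in the final Hausdorff dimension bound.
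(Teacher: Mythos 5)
Your broad case is in the right spirit---the paper also bounds the transversal contribution using the Carbery--Valdimarsson weighted multilinear Kakeya inequality---but your narrow case contains a genuine error that cannot be repaired as stated. You propose that the tubes trapped near a $d$-plane $V_B$ give, after rescaling, lines in $\RR^d$ to which one applies the conjecture by induction on the ambient dimension, ``yielding in $V_B$ a subset of Hausdorff dimension at least $d+\beta$.'' This is impossible: a subset of a $d$-dimensional affine plane has Hausdorff dimension at most $d<d+\beta$ when $\beta>0$. Moreover, the space of lines contained in $\RR^d$ has dimension exactly $2(d-1)$, so the hypothesis $\dim L\geq 2(d-1)+\beta$ is vacuous there for $\beta>0$; the conjecture in ambient dimension $d$ simply never returns the exponent $d+\beta$ you need. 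Failure of broadness does not force the tubes into a $d$-plane in the sense you want---it only forces their \emph{directions} into a $\rho$-neighborhood of a $d$-dimensional subspace of $S^{n-1}$, which is a statement about angles, not positions.

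The paper resolves the narrow case completely differently, by induction on \emph{scale} within the same ambient $\RR^n$. When the directions cluster within $\rho$ of a $d$-dimensional subspace $H$, one covers the relevant directions by $\lesssim\rho^{1-d}$ caps of radius $\rho$; for each cap the contributing $\delta$-tubes fit inside a bounded family of thin $\rho$-tubes; each $\rho$-tube is anisotropically rescaled to a unit ball, turning the $\delta$-tubes into $(\delta/\rho)$-tubes that still satisfy the same non-concentration hypothesis; and then the induction hypothesis at scale $\delta/\rho>\delta$ is applied. The gain $\rho^{1-d}$ in the cap count against the loss in the rescaled bound is exactly what closes the induction. This also highlights a second omission in your plan: you never state the non-concentration condition \eqref{nonConcentration} on the discretized tube family (a Frostman-type ball condition inherited from the measure on $L$), but without it the narrow contribution cannot be controlled at all, and the reduction to an analytic set $L_2$ supporting such a measure (Lemma \ref{HKMLem}) is the step that makes the pigeonholing rigorous in the first place.
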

The bound \eqref{dimensionBound} is best-possible, since $L$ could be the set of lines contained in a $\beta$-dimensional union of (parallel) $d$-dimensional affine subspaces of $\RR^n$. In \cite{ROb}, R.~Oberlin proved the finite field analogue of Conjecture \ref{mainConj}, and this is also the first place where the conjecture appeared in print. In \cite{HKM}, H\'era, Keleti, and M\'ath\'e considered a variant of Conjecture \ref{mainConj} where $d=0$ and $L$ is replaced by a set of $k$-dimensional affine subspaces of $\RR^n$. Finally in \cite{He}, H\'era posed a generalization of Conjecture \ref{mainConj}, where lines are replaced by $k$-dimensional affine subspaces of $\RR^n$. 

When $d=1$, Conjecture \ref{mainConj} was proved by H\'era, Keleti, and M\'ath\'e \cite{HKM} and when $d=n-1,$ Conjecture \ref{mainConj} was proved by D.~Oberlin \cite{DOb}. In this paper, we will prove Conjecture \ref{mainConj} for the remaining values of $d$. As is typical for such problems, the conjecture will follow from the corresponding maximal function estimate, which we will state below. In what follows, $\mathcal{L}_n$ is the set of lines in $\RR^n$, with the following metric: if $\ell,\ell'\in\mathcal{L}_n$, then $d(\ell,\ell') = |x-x'| + |u\wedge u'|$. Here $u$ (resp $u'$) is a unit vector parallel to $\ell$; $x$ (resp. $x'$) is the unique point on $\ell$ with $x\perp u$; and $|u\wedge u|$ is the (unsigned) area of the parallelogram spanned by $u$ and $u'$. A $\delta$-tube is the $\delta$-neighborhood of a unit line segment in $\RR^n$, and if $T$ is a $\delta$-tube, then $\ell(T)$ is the line coaxial with $T$. Finally, if $p\in [1,\infty]$, then $p'$ denotes the conjugate Lebesgue exponent.
\begin{thm}\label{maximalFnEstimateTubes}
Let $n\geq 2$ and $\eps>0$. Then there exists $C=C(n,\eps)$ so that the following holds. Let $1\leq d<n$ be an integer, and let $0\leq\beta\leq 1$. Let $\tubes$ be a set of $\delta$-tubes in $B(0,1)\subset \RR^n$. Suppose that for all balls $B\subset\mathcal{L}_n$ of radius $r$, we have
\begin{equation}\label{nonConcentration}
\#\{T\in\tubes \colon \ell(T)\subset B\}\leq (r/\delta)^{2(d-1)+\beta}.
\end{equation}
Then
\begin{equation}\label{LPBound}
\Big\Vert \sum_{T\in\tubes}\chi_T\Big\Vert_p \leq C\delta^{\frac{1-d}{p'}-\eps}\Big(\sum_{T\in\tubes}|T|\Big)^{1/p},\quad p' = d+\beta.
\end{equation} 
\end{thm}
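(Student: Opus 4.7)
The plan is to prove Theorem~\ref{maximalFnEstimateTubes} by induction on the scale $\delta$, within the Bourgain--Guth multilinear-to-linear scheme, using the refined Carbery--Valdimarsson multilinear Kakeya inequality as the engine for the broad case. Fix a parameter $K = K(\delta, \eps)$, chosen as a small negative power of $\delta$; partition $B(0,1)\subset\RR^n$ into spatial balls $B_K$ of radius $K^{-1}$, partition $S^{n-1}$ into caps $\tau$ of radius $K^{-1}$, and for each pair let $\tubes_{B_K,\tau}\subset\tubes$ denote the tubes of $\tubes$ passing through $B_K$ with direction in $\tau$. For each $B_K$ I invoke a broad/narrow dichotomy: either (\emph{broad}) there exist $d+1$ caps $\tau_1,\ldots,\tau_{d+1}$ whose direction vectors are quantitatively $K^{-1}$-transverse and each $\tubes_{B_K,\tau_i}$ carries a $\gtrsim K^{-O(1)}$ fraction of $\sum_T\chi_T(x)$ at some $x\in B_K$, or (\emph{narrow}) the sum is dominated by tubes with directions in the $K^{-1}$-neighborhood of some $d$-plane $V_{B_K}\subset\RR^n$.

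The key observation in the narrow case is that tubes through a fixed $B_K$ with directions in the $K^{-1}$-neighborhood of $V_{B_K}$ have coaxial lines that all lie in a single ball of radius $\lesssim K^{-1}$ in $\mathcal{L}_n$: the direction varies by $\lesssim K^{-1}$ by hypothesis, and because each line passes through a ball of diameter $K^{-1}$, its foot of perpendicular from the origin also varies by $\lesssim K^{-1}$. The non-concentration hypothesis~\eqref{nonConcentration} then bounds the number of such tubes by $(K\delta)^{-(2(d-1)+\beta)}$, which gives a pointwise -- hence $L^p$ -- bound on the narrow contribution on $B_K$ that is summable across the partition. Alternatively one can feed the narrow subfamily back into the theorem via the induction hypothesis at a finer effective scale.

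In the broad case, the Bourgain--Guth inequality yields
\[
\sum_T \chi_T(x)\ \lesssim\ K^{C}\Big(\prod_{i=1}^{d+1}\sum_{T\in\tubes_{B_K,\tau_i}}\chi_T(x)\Big)^{1/(d+1)}
\]
at $x \in B_K$. Raising to the $p$-th power, integrating, and invoking the refined Carbery--Valdimarsson multilinear Kakeya inequality controls $\int \prod_{i=1}^{d+1}\bigl(\sum_{T\in\tubes_{B_K,\tau_i}}\chi_T\bigr)^{p/(d+1)}$ in a way that incorporates the non-concentration~\eqref{nonConcentration}; after accounting for the $\gtrsim K^{-C_n}$ transversality, this delivers the desired $L^p$ estimate with $p' = d+\beta$.

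The broad and narrow contributions combine with a loss of $K^{O(1)}$, absorbed into $\delta^{-\eps}$ by taking $K = \delta^{-\kappa}$ for $\kappa = \kappa(n,\eps)$ sufficiently small; standard induction on $\delta$ closes the estimate. \textbf{The main obstacle} is that plain $(d+1)$-linear Kakeya in $\RR^n$ with $d+1 < n$ yields only the Loomis--Whitney type exponent, which is weaker than the target $p' = d+\beta$. Obtaining the sharp exponent requires the Carbery--Valdimarsson refinement, whose non-concentration input must be matched carefully to~\eqref{nonConcentration} cap-by-cap, and the hypothesis must then be propagated through the induction-on-scales bookkeeping -- this is where the technical heart of the argument lies.
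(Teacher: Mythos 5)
Your broad case is roughly in the spirit of the paper, though the framing is a bit off: the version of multilinear Kakeya used (Theorem~\ref{multilinearKakeyaThm}) carries the weight $|u_1\wedge\cdots\wedge u_{d+1}|$ inside the multilinear expression, so no a priori transversal separation into caps is needed, and the non-concentration hypothesis~\eqref{nonConcentration} enters the multilinear estimate only through the crude global count $\#\tubes\lesssim\delta^{2(1-d)-\beta}$ (i.e.\ \eqref{nonConcentration} at $r\sim1$), which in Lemma~\ref{calculationLem} is used to interpolate from the Kakeya endpoint $p=(d+1)/d$ up to $p'=d+\beta$. There is no ``cap-by-cap matching'' of \eqref{nonConcentration} inside the multilinear inequality.

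The genuine gap is in the narrow case. The claim that tubes through $B_K$ with directions in the $K^{-1}$-neighborhood of a $d$-plane $V_{B_K}$ have coaxial lines in a single ball of radius $\lesssim K^{-1}$ in $\mathcal{L}_n$ is false for $d\geq 2$: two unit vectors can lie exactly in $V_{B_K}$ and be orthogonal, so $|u\wedge u'|\sim1$ and the corresponding lines are far apart in $\mathcal{L}_n$. The narrow family instead occupies $\sim K^{d-1}$ balls of radius $K^{-1}$ in $\mathcal{L}_n$ (one for each $K^{-1}$-cap of $S^{n-1}\cap V_{B_K}$), and applying~\eqref{nonConcentration} to each gives a pointwise count of order $K^{d-1}(K\delta)^{-(2(d-1)+\beta)}$. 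For a pointwise bound to give an acceptable error after H\"older one would need this to be $\lesssim\delta^{1-d}$, but the ratio works out to $(K\delta)^{-(d-1+\beta)}$, an unbounded power of $\delta^{-1}$ once $K=\delta^{-\kappa}$ with $\kappa<1$. So the narrow case cannot be closed by any pointwise counting argument; induction on scales is unavoidable. Your ``alternative'' of feeding the narrow family into the induction hypothesis is the right idea, but at a \emph{coarser} effective scale $\delta/\rho>\delta$ (not finer), and the essential steps are missing: one must localize not merely to direction caps but to $\rho$-tubes $T_\rho$ (this is \eqref{furtherDecomposition}), rescale each $T_\rho$ to $B(0,1)$ so that the $\delta$-tubes inside become $\delta/\rho$-tubes, check that~\eqref{nonConcentration} is preserved under this anisotropic rescaling, and exploit the factor $\rho^{(1-d)/p'}$ that the rescaling produces in Corollary~\ref{towardsInductionOnScalesCor} -- it is precisely this factor, combined with the induction hypothesis at scale $\delta/\rho$, that closes the loop.
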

We prove Theorem \ref{maximalFnEstimateTubes} by combining the multilinear Kakeya theorem \cite{BCT, G} with the multilinear to  linear arguments developed by Bourgain and Guth in \cite{BG}; this is done in Section \ref{BGSection}. A secondary goal of this paper is provide a short and self-contained introduction to this type of argument. We prove Conjecture \ref{mainConj} by first using a result from \cite{HKM} to reduce to the case where $L$ is analytic and hence supports a Frostman measure, and then using a standard discretization argument; this is done in Section \ref{mainConjProofSec}.

\subsection{Thanks}
The author would like to thank Korn\'elia H\'era for comments on an earlier version of this manuscript. 

\section{The Bourgain-Guth multilinear $\to$ linear argument}\label{BGSection}
In \cite{BG}, Bourgain and Guth combined the multilinear restriction theorem of Bennett, Carbery, and Tao \cite{BCT} (see also \cite{G}) with induction on scales to prove new Fourier restriction estimates. In this section, we revisit those arguments and obtain variants that are adapted to the proof of Theorem \ref{maximalFnEstimateTubes}. 

In the arguments that follow, we adopt the notation $A\lesssim B$ to mean that there is a constant $C$ (which may depend on the ambient dimension $n$, and possibly also the Lebesgue exponent $p$, if this is relevant) so that $A\leq CB$. 

We will use the following version of the multilinear Kakeya theorem, which is Theorem 1 from \cite{CV}. In what follows, $|u_1\wedge\ldots\wedge u_k|$ is the unsigned volume of the parallelepiped spanned by $u_1,\ldots,u_k$.
\begin{thm}\label{multilinearKakeyaThm}
Let $2\leq k\leq n$ and let $\tubes_1,\ldots,\tubes_k$ be sets of $\delta$-tubes in $\RR^n$. Then
\begin{equation}
\Big\Vert\Big(\sum_{T_1\in\tubes_1}\ldots\sum_{T_k\in\tubes_k} \chi_{T_1}\cdots\chi_{T_k}|u_1\wedge\ldots\wedge u_k|\Big)^{\frac{1}{k}}\Big\Vert_{\frac{k}{k-1}}
\lesssim \Big(\frac{1}{\delta}\Big)^{\frac{n}{k}-1}\prod_{i=1}^k\Big(\sum_{T_i\in\tubes_i}|T_i|\Big)^{\frac{1}{k}},
\end{equation}
where in the above expression, $u_i$ is the direction of the tube $T_i$. 
\end{thm}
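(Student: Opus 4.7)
Theorem~\ref{multilinearKakeyaThm} is the refinement due to Carbery and Valdimarsson of the endpoint multilinear Kakeya theorem of Bennett--Carbery--Tao and Guth. The classical statement corresponds to $k=n$ together with $|u_1\wedge\cdots\wedge u_n|\sim 1$, while the version above allows $k<n$ and quantifies transversality through the weight $|u_1\wedge\cdots\wedge u_k|$. My plan is to reduce it to the sharp endpoint theorem in $\RR^k$ via a dyadic decomposition of the wedge. Alternative routes, such as the heat-flow monotonicity argument of Bennett--Carbery--Tao or Guth's algebraic-topological method, could be adapted as well.

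First, by pigeonholing over dyadic scales I fix $|u_1\wedge\cdots\wedge u_k|\sim w$ for some $w\in(0,1]$; the logarithmic loss this incurs is absorbed into the implicit constant. I then partition $S^{n-1}$ into spherical caps of angular diameter $\sim w^{1/(k-1)}$ and retain only products $\sigma_1\times\cdots\times\sigma_k$ of caps whose representative directions have wedge $\sim w$. On each such cap tuple the directions are quantitatively $w$-transverse and span an essentially unique $k$-plane $V_{\vec\sigma}\subset\RR^n$.

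Next, I project the tubes indexed by each cap tuple onto $V_{\vec\sigma}$ and apply a linear rescaling of $V_{\vec\sigma}$ of Jacobian $\sim w^{-1}$ that makes the projected directions $O(1)$-transverse. Guth's sharp endpoint multilinear Kakeya theorem in $\RR^k$ then applies to the resulting family. Integrating the ensuing bound over the $(n-k)$-dimensional orthogonal complement $V_{\vec\sigma}^{\perp}$ contributes a factor of $\delta^{n-k}$; the weight $w$ on the left cancels the rescaling Jacobian $w^{-1}$; and combining these with the $\delta^{k-1}$ scaling of tube volumes in $\RR^k$ produces exactly the exponent $(1/\delta)^{n/k-1}$. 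The individual cap-tuple estimates are finally glued into the global bound via Minkowski's inequality in $L^{k/(k-1)}$, which is subadditive since $k/(k-1)>1$, so that summing over the almost-disjoint contributions of distinct cap tuples costs only a constant depending on $n$.

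The main obstacle will be this last calculation: tracking the interplay among the wedge weight, the rescaling Jacobian, the orthogonal-fibre integration, and the sharpness of Guth's endpoint theorem carefully enough that no ancillary $\delta^{-\eps}$ factors creep in. The cleanest organization is probably to work throughout in coordinates adapted to $\vec\sigma$, which recasts the desired estimate as an anisotropic Loomis--Whitney-type inequality on $V_{\vec\sigma}$ tensored with the trivial $\delta^{n-k}$ factor from the orthogonal fibre.
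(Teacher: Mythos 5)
The paper does not prove this theorem: it is cited verbatim as Theorem~1 of Carbery and Valdimarsson~\cite{CV}, whose actual argument is a direct topological proof via the Borsuk--Ulam theorem (a relative of, but distinct from, Guth's polynomial method). So there is no in-paper proof to compare against, and your proposed route --- a dyadic decomposition of the wedge followed by a reduction to the $k$-linear endpoint theorem in $\RR^k$ --- is genuinely different from the source. That said, the sketch as written has a gap that is not a bookkeeping nuisance but a structural one.

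The decisive problem is your first step: pigeonholing $|u_1\wedge\cdots\wedge u_k|$ into dyadic scales $w$ and then ``absorbing the logarithmic loss into the implicit constant.'' There are $\sim\log(1/\delta)$ dyadic values of $w$ between $\delta^{k-1}$ and $1$, so the Minkowski gluing over $w$-scales costs a factor $\log(1/\delta)$, which is not a constant --- it depends on $\delta$. For an endpoint theorem like this one that loss is fatal; it is precisely what the Carbery--Valdimarsson (and Guth, and Bennett--Carbery--Tao) arguments are engineered to avoid. To rescue a decomposition-based proof you would need each $w$-scale to carry a genuine gain, e.g.\ a leftover factor of $w^{\alpha}$ for some $\alpha>0$ that makes the dyadic sum geometric; but your own accounting (``the weight $w$ on the left cancels the rescaling Jacobian $w^{-1}$'') produces no such gain. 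Two secondary concerns: (i) within a spherical cap of diameter $w^{1/(k-1)}$ the wedge of a $k$-tuple of directions is \emph{not} pinned to $\sim w$ --- perturbing one $u_i$ by angle $\eta$ can change the wedge by up to $\sim\eta$, so to localize the wedge you would need caps of diameter $\sim w$, which drastically increases the number of cap tuples; and (ii) the $k$-plane $V_{\vec\sigma}$ ``spanned'' by a cap tuple is not stable when $w$ is small, since tilting one nearly-degenerate direction by $\eta$ can tilt the spanned plane by $\sim\eta/w$. Either of these would have to be confronted before the projection/rescaling step can even be formulated cleanly. In short, the plan identifies the right landmarks (Loomis--Whitney structure, tensoring with the orthogonal fibre, Minkowski gluing) but the step you flagged as ``the main obstacle'' is genuinely where the argument breaks, and as stated the proof does not close.
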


When proving Theorem \ref{maximalFnEstimateTubes}, Theorem \ref{multilinearKakeyaThm} is helpful in the special case where, for a typical point $x$ where $\big(\sum\chi_T\big)^p$ is large, most $(d+1)$-tuples of tubes $T_1,\ldots,T_{d+1}$ that contain $x$ satisfy $|u_1\wedge\ldots\wedge u_{d+1}|\sim 1$. Indeed, it is straightforward to verify that our desired bound \eqref{LPBound} follows from Theorem \ref{multilinearKakeyaThm} in this case. The Bourgain-Guth multilinear $\to$ linear argument establishes a dichotomy between this situation and an opposite extreme, where most of the tubes containing a typical point $x$ are contained in the thin neighborhood of a $d$-dimensional affine subspace. 

The lemmas below will help make this dichotomy precise. In what follows, if $H$ is a $k$-dimensional vector subspace of $\RR^n$ and $u\in\RR^n$ is a unit vector, we write $|H\wedge u|$ to mean $|u_1\wedge \ldots\wedge u_k\wedge u|$, where $u_1,\ldots,u_k$ are orthonormal vectors that span $H$; the value of $|H\wedge u|$ is independent of the choice of $u_1,\ldots,u_k$. In particular, $|H\wedge u|=0$ if $u\in H$, and $|H\wedge u|=1$ if $u\perp H$.

\begin{lem}\label{kLinearVsPlainy}
Let $U\subset S^{n-1}$ be a multiset of unit vectors, let $1\leq k\leq n$, and let $0\leq \rho\leq 1$. Then at least one of the following must hold.

\medskip
\noindent {\bf A.}
\vspace*{-0.5cm}
\[
\#\big\{(u_1,\ldots,u_k)\in U^k\colon |u_1\wedge\ldots\wedge u_k|\geq\rho^{k-1}\big\} \geq \frac{1}{2}(\#U)^k.
\]
\noindent {\bf B.} There is a $(k-1)$-dimensional vector subspace $H\subset \RR^n$ so that
\[
\#\{u\in U\colon |H\wedge u|\leq \rho\} \geq 2^{-2k}(\#U).
\]

In particular, we always have
\begin{equation}\label{controlCardU}
(\#U)\lesssim \rho^{\frac{1-k}{k}}\Big(\sum_{u_1\in U}\ldots\sum_{u_k\in U}|u_1\wedge\ldots \wedge u_k|\Big)^{1/k}+\sup_{H\in \operatorname{Gr}(k-1,\RR^n)}\#\{u\in U\colon |H\wedge u|\leq\rho\}.
\end{equation}
\end{lem}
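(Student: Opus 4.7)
The plan is to prove the dichotomy by showing the contrapositive: if the subspace-concentration statement B fails, then A must hold. The ``in particular'' estimate \eqref{controlCardU} will then follow by bounding $\#U$ in each case of the dichotomy.

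The key structural observation is that the failure of B at dimension $k-1$ propagates to every lower dimension: if a $j$-dimensional subspace $H'$ with $j<k-1$ had $\#\{u\in U\colon |H'\wedge u|\leq\rho\}\geq 2^{-2k}(\#U)$, extending $H'$ arbitrarily to a $(k-1)$-dimensional $H\supset H'$ would yield $|H\wedge u|\leq |H'\wedge u|$ (the ``distance'' from $u$ to a larger subspace is smaller), so $H$ would inherit the concentration, contradicting the failure of B. Consequently, the B-failure hypothesis may be invoked against subspaces of arbitrary dimension $\leq k-1$.

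Given this, I would pick $u_1,\ldots,u_k\in U$ independently and uniformly at random and exploit the chain rule
\[
|u_1\wedge\cdots\wedge u_k| = \prod_{j=1}^{k}|H_{j-1}\wedge u_j|, \qquad H_{j-1}:=\operatorname{span}(u_1,\ldots,u_{j-1}),
\]
where by convention $H_0=\{0\}$ so that the $j=1$ factor is $|u_1|=1$. Conditioning on $u_1,\ldots,u_{j-1}$, the subspace $H_{j-1}$ is deterministic of dimension at most $j-1\leq k-1$, so the failure of B gives
\[
\Pr\!\bigl[\,|H_{j-1}\wedge u_j|\leq \rho \,\bigm|\, u_1,\ldots,u_{j-1}\bigr] < 2^{-2k}.
\]
A union bound over the $k-1$ nontrivial steps $j=2,\ldots,k$ shows that with probability at least $1-(k-1)2^{-2k}\geq 1/2$ (the numerical inequality $(k-1)2^{-2k}\leq 1/2$ is elementary for every $k\geq 1$), every factor exceeds $\rho$. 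On this event the $u_j$ are automatically linearly independent (so the chain rule is valid and the wedge is nonzero), and one gets $|u_1\wedge\cdots\wedge u_k|>\rho^{k-1}$. That is exactly statement A.

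Finally, \eqref{controlCardU} drops out of the dichotomy: in Case A, $\sum_{u_1,\ldots,u_k\in U}|u_1\wedge\cdots\wedge u_k|\geq \tfrac{1}{2}\rho^{k-1}(\#U)^k$, which rearranges to $\#U\lesssim \rho^{(1-k)/k}\bigl(\sum|u_1\wedge\cdots\wedge u_k|\bigr)^{1/k}$; in Case B, $\#U\leq 2^{2k}\sup_{H}\#\{u\in U\colon |H\wedge u|\leq\rho\}$. I do not expect a serious obstacle: the main moving part is the downward propagation of the B-failure, and everything else is a straight union bound. The only genuine care-points are the $\rho=0$ degenerate case (where A is automatic) and verifying the harmless numerical inequality $(k-1)2^{-2k}\leq 1/2$.
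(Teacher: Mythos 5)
Your proof is correct, and it takes a genuinely different route from the paper. The paper argues in the direct direction (A fails implies B holds) via a combinatorial ``minimal bad index'' argument: it takes the smallest $j$ for which many $j$-tuples have small wedge, uses minimality to find a single $(j-1)$-tuple that is ``thick'' (wedge at least $\rho^{j-2}$) yet has many extensions $u$ with small wedge $|u_1\wedge\cdots\wedge u_{j-1}\wedge u|<\rho^{j-1}$, and then divides through (the chain rule) to conclude $|W\wedge u|\leq\rho$ for many $u$, where $W=\operatorname{span}(u_1,\ldots,u_{j-1})$. You argue the contrapositive (B fails implies A holds) probabilistically: first observe that the failure of B propagates downward to all subspaces of dimension at most $k-1$ because $|H\wedge u|$ is monotone nonincreasing under enlarging $H$; then sample $u_1,\ldots,u_k$ uniformly at random, factor the wedge via the same chain rule $|u_1\wedge\cdots\wedge u_k|=\prod_{j=2}^k|H_{j-1}\wedge u_j|$, and union-bound the $k-1$ conditional events $\{|H_{j-1}\wedge u_j|\leq\rho\}$, each of conditional probability below $2^{-2k}$, to get a tuple with every factor exceeding $\rho$ with probability at least $1/2$. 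The two arguments use the same two structural ingredients — the base-times-height factorization of the wedge and the monotonicity of $|H\wedge u|$ — but your probabilistic framing avoids the pigeonhole over thick $(j-1)$-tuples, and arguably makes the role of the constant $2^{-2k}$ (as a budget to be split across $k-1$ union-bound steps) more transparent. One small point worth stating explicitly if you write this up: on the good event the $u_j$ are automatically linearly independent, so the chain rule is applied with all factors positive; but in fact the factorization $|u_1\wedge\cdots\wedge u_j|=|u_1\wedge\cdots\wedge u_{j-1}|\cdot|H_{j-1}\wedge u_j|$ is valid even when earlier vectors are dependent (both sides vanish), so no caveat is actually needed there.
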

\begin{proof}
When $k=1$ there is nothing to prove, so we can assume $k\geq 2$. Suppose Option A does not hold. Let $N = \#U$ and let $2\leq j\leq k$ be the smallest index so that
\begin{equation}\label{jTuples}
\#\big\{(u_1,\ldots,u_j)\in U^j\colon |u_1\wedge\ldots\wedge u_j|<\rho^{j-1}\big\}\geq 2^{2j-2k-1}N^j.
\end{equation}
(Such an index must exist, since the failure of Option A implies that \eqref{jTuples} holds for $j=k$). For notational simplicity, we will suppose that $j>2$ (the same argument works when $j=2$, though some care must be taken with indices). There are at least $ 2^{2j-2k-2}N^{j-1}$ tuples $(u_1,\ldots,u_{j-1})$ for which
\begin{equation}\label{smallWedge}
\#\big\{u\in U\colon |u_1\wedge\ldots\wedge u_{j-1}\wedge u| < \rho^{j-1}\big\}\geq 2^{2j-2k-2}N.
\end{equation} 
But by the minimality of $j$, at most $2^{2j-2k-3}N^{j-1}$ of these tuples satisfy $|u_1\wedge\ldots\wedge u_{j-1}|<\rho^{j-2}$. We conclude that there exists a tuple $(u_1,\ldots u_{j-1})$ that satisfies \eqref{smallWedge}, with $|u_1\wedge\ldots\wedge u_{j-1}|\geq \rho^{j-2}$. Let $W=\operatorname{span}\{u_1,\ldots,u_{j-1}\}$. Then $|W\wedge u|\leq \rho$ for each $u$ in the set \eqref{smallWedge}, i.e.
\[
\#\{u\in U\colon |W\wedge u|\leq \rho\}\geq 2^{2j-2k-2}N \geq 2^{-2k}N.
\]
The result now follows by selecting a $(k-1)$-dimensional vector subspace $H$ containing $W$. 
\end{proof}

\begin{cor}\label{LpConsequence}
Let $U\subset S^{n-1}$ be a multiset of unit vectors, let $2\leq k\leq n$, and let $0<\rho\leq 1$. Let $\{\tau\}$ be a covering of $S^{n-1}$ by spherical caps of diameter $\rho$, and suppose each $u\in S^{n-1}$ is contained in at most $10^{n}$ caps. Let $1\leq p < \infty$. Then
\begin{equation}\label{controlUp}
(\#U)^p \lesssim \rho^{\frac{(1-k)p}{k}}\Big(\sum_{u_1\in U}\ldots\sum_{u_k\in U}|u_1\wedge\ldots \wedge u_k|\Big)^{p/k}+\rho^{(2-k)(p-1)}\sum_\tau\big(\# (U\cap\tau)\big)^p.
\end{equation}
\end{cor}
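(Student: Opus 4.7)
The plan is to start from Lemma \ref{kLinearVsPlainy}, raise both sides to the $p$-th power (using $(a+b)^p\leq 2^{p-1}(a^p+b^p)$), and then convert the subspace term into the cap term by a covering estimate followed by H\"older's inequality. The first term in \eqref{controlCardU}, once raised to the $p$-th power, becomes exactly the first term of \eqref{controlUp}, so nothing needs to be done there.

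For the subspace term, fix a $(k-1)$-dimensional vector subspace $H\subset\RR^n$. The set $\{u\in S^{n-1}\colon |H\wedge u|\leq\rho\}$ is the $\rho$-neighborhood in $S^{n-1}$ of the great $(k-2)$-sphere $H\cap S^{n-1}$. Since this great sphere has codimension $n-k+1$ in $S^{n-1}$, its $\rho$-neighborhood has $(n-1)$-dimensional surface measure $\lesssim\rho^{n-k+1}$; the bounded-overlap hypothesis on $\{\tau\}$ then forces the subcollection $\mathcal{T}_H = \{\tau\colon \tau\cap N_\rho(H\cap S^{n-1})\neq\emptyset\}$ to satisfy $\#\mathcal{T}_H\lesssim \rho^{n-k+1}/\rho^{n-1}=\rho^{2-k}$. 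Every $u\in U$ with $|H\wedge u|\leq\rho$ lies in some cap of $\mathcal{T}_H$, so H\"older's inequality (with exponents $p'$ and $p$) gives
\[
\#\{u\in U\colon |H\wedge u|\leq\rho\}\leq \sum_{\tau\in\mathcal{T}_H}\#(U\cap\tau)\leq (\#\mathcal{T}_H)^{1/p'}\Big(\sum_\tau\big(\#(U\cap\tau)\big)^p\Big)^{1/p}.
\]

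Raising to the $p$-th power and using $p/p' = p-1$ together with $\#\mathcal{T}_H\lesssim \rho^{2-k}$ produces exactly the factor $\rho^{(2-k)(p-1)}\sum_\tau(\#(U\cap\tau))^p$ appearing in \eqref{controlUp}. Taking the supremum over $H$ is vacuous because the right-hand side no longer depends on $H$, and adding the first-term contribution finishes the proof. The only substantive ingredient is the covering estimate $\#\mathcal{T}_H\lesssim \rho^{2-k}$; everything else is a formal combination of Lemma \ref{kLinearVsPlainy} with H\"older's inequality. I expect the main nuisance to be simply keeping track of $n$-dependent constants (absorbed into $\lesssim$) and checking that the argument degenerates sensibly in the endpoint cases $k=2$ (where the subspace term contributes no $\rho$-factor because the neighborhood is a pair of caps) and $k=n$ (where it is a thin belt around an equator).
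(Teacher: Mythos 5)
Your proof is correct and follows essentially the same route as the paper: apply Lemma \ref{kLinearVsPlainy}, use the covering estimate $\#\mathcal{T}_H\lesssim\rho^{2-k}$ (which the paper asserts without elaboration and you justify via the measure of the $\rho$-neighborhood of $H\cap S^{n-1}$ divided by the cap measure), then H\"older and relaxing the sum to all caps. The only difference is that you spell out the geometric reason for the cap count and the endpoint checks, which the paper leaves implicit.
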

\begin{proof}
Apply Lemma \ref{kLinearVsPlainy}. The first term in \eqref{controlCardU} corresponds to the first term in \eqref{controlUp}. For the second term, note that if $H$ is a $(k-1)$-dimensional subspace of $\RR^n$, then $\lesssim \rho^{2-k}$ of the caps from $\{\tau\}$ intersect the set $\{u\in S^{n-1}\colon |H\wedge u|\leq \rho\}$ (it is not important whether the supremum in \eqref{controlCardU} is realized). Next we use H\"older's inequality, and finally we add the remaining caps to the RHS of \eqref{controlUp}. 
\end{proof}

We record the following consequence of Corollary \ref{LpConsequence}.
\begin{lem}
Let $0<\delta\leq\rho\leq 1$, let $2\leq k\leq n$, and let $1\leq p<\infty$. 
 Let $\{\tau\}$ be a covering of $S^{n-1}$ by spherical caps of diameter $\rho$ that is at most $10^{n}$-overlapping. Let $\tubes$ be a set of $\delta$-tubes in $B(0,1)\subset \RR^n$. Then
\begin{equation}\label{decomposeLpNorm}
\Big\Vert \sum_{T\in\tubes}\chi_T\Big\Vert_p 
\lesssim \rho^{\frac{1-k}{k}}\Big\Vert \Big(\sum_{T_1,\ldots,T_k \in\tubes} \chi_{T_1}\cdots\chi_{T_k} |u_1\wedge\ldots \wedge u_k|\Big)^{\frac{1}{k}}\Big\Vert_p
+\rho^{\frac{2-k}{p'}}\Big(\sum_\tau\Big\Vert \sum_{\substack{T\in\tubes\\ \operatorname{dir}(T)\in\tau}}\chi_T  \Big\Vert_p^p\Big)^{\frac{1}{p}}.
\end{equation}
\end{lem}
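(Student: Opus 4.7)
The plan is to apply Corollary \ref{LpConsequence} pointwise for each $x\in B(0,1)$, then integrate in $x$ and take $p$-th roots.

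Fix $x\in \RR^n$ and let $U_x$ be the multiset consisting of the directions $u=\operatorname{dir}(T)$ of those tubes $T\in\tubes$ with $x\in T$, counted with multiplicity. Then
\[
\sum_{T\in\tubes}\chi_T(x) = \# U_x, \qquad
\sum_{u_1,\ldots,u_k \in U_x}|u_1\wedge\ldots\wedge u_k| = \sum_{T_1,\ldots,T_k\in\tubes}\chi_{T_1}(x)\cdots\chi_{T_k}(x)\,|u_1\wedge\ldots\wedge u_k|,
\]
and for each cap $\tau$ in the cover of $S^{n-1}$,
\[
\#(U_x\cap \tau) = \sum_{\substack{T\in\tubes\\ \operatorname{dir}(T)\in\tau}}\chi_T(x).
\]
Applying Corollary \ref{LpConsequence} to $U = U_x$ with the given value of $\rho$ and exponent $p$ yields, for every $x$,
\[
\Big(\sum_{T\in\tubes}\chi_T(x)\Big)^p
\lesssim \rho^{\frac{(1-k)p}{k}}\Big(\sum_{T_1,\ldots,T_k\in\tubes}\chi_{T_1}(x)\cdots\chi_{T_k}(x)\,|u_1\wedge\ldots\wedge u_k|\Big)^{p/k}
+ \rho^{(2-k)(p-1)}\sum_{\tau}\Big(\sum_{\substack{T\in\tubes\\ \operatorname{dir}(T)\in\tau}}\chi_T(x)\Big)^p.
\]

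Next, I would integrate this inequality in $x$; the right hand side becomes a sum of the $p$-th powers of the two $L^p$ norms appearing on the right of \eqref{decomposeLpNorm}. Taking $p$-th roots and using $(a+b)^{1/p}\leq a^{1/p}+b^{1/p}$ gives
\[
\Big\Vert \sum_{T\in\tubes}\chi_T\Big\Vert_p
\lesssim \rho^{\frac{1-k}{k}}\Big\Vert\Big(\sum_{T_1,\ldots,T_k\in\tubes}\chi_{T_1}\cdots\chi_{T_k}\,|u_1\wedge\ldots\wedge u_k|\Big)^{1/k}\Big\Vert_p
+ \rho^{\frac{(2-k)(p-1)}{p}}\Big(\sum_\tau\Big\Vert\sum_{\substack{T\in\tubes\\ \operatorname{dir}(T)\in\tau}}\chi_T\Big\Vert_p^p\Big)^{1/p}.
\]
Finally, the identity $\frac{(2-k)(p-1)}{p} = \frac{2-k}{p'}$ rewrites the exponent of $\rho$ in the second term to match \eqref{decomposeLpNorm}, completing the proof.

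There is no serious obstacle here: the lemma is essentially the $L^p$ integrated form of the pointwise inequality in Corollary \ref{LpConsequence}. The only minor bookkeeping is keeping track of the exponents of $\rho$ through the $p$-th root, which is handled by the identity above.
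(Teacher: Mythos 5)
Your proof is correct and follows exactly the paper's intended argument: apply Corollary \ref{LpConsequence} pointwise to the multiset $U_x = \{\operatorname{dir}(T)\colon x\in T\}$, integrate, and take $p$-th roots, with the exponent bookkeeping $\frac{(2-k)(p-1)}{p} = \frac{2-k}{p'}$. The paper's proof is just a one-line version of the same thing.
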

\begin{proof}
For each $x\in\RR^n$, we apply Corollary \ref{LpConsequence} to the multiset of unit vectors $\{\operatorname{dir}(T)\colon x\in T\}$ and integrate. 
\end{proof}
Observe that if $\delta<\rho/2$, then the final term in \eqref{decomposeLpNorm} can be further decomposed. For each cap $\tau$ with center $u\in S^{n-1}$, we can cover $B(0,1)\subset\RR^n$ by boundedly overlapping $\rho$-tubes that point in direction $u$. If $T\in\tubes$ and $\operatorname{dir}(T)\in \tau$, then $T$ will be contained in at least 1, and at most $\lesssim 1$ tubes $T_\rho$ of this type. Let $\tubes_\rho$ denote the set of all such tubes, for all caps $\tau$. Then each $T\in\tubes$ is contained in at least 1, and at most $\lesssim 1$ tubes from $\tubes_\rho$. Thus we have
\begin{equation}\label{furtherDecomposition}
\sum_\tau\Big\Vert \sum_{\substack{T\in\tubes\\ \operatorname{dir}(T)\in\tau}}\chi_T  \Big\Vert_p^p
\lesssim \sum_{T_\rho\in\tubes_\rho}\Big\Vert \sum_{\substack{T\in\tubes\\ T\subset T_\rho }}\chi_T  \Big\Vert_p^p,\quad 1\leq p<\infty.
\end{equation}
The advantage is that each term on the RHS of \eqref{furtherDecomposition} has been localized to a $\rho$-tube.

We will use Theorem \ref{multilinearKakeyaThm} to control the first term on the RHS of \eqref{decomposeLpNorm}. We record the following computation.
\begin{lem}\label{calculationLem}
Let $1\leq d<n$ be integers, let $\beta\in[0,1]$, and let $\tubes$ be a set of $\delta$-tubes in $B(0,1)\subset \RR^n$, with
$\#\tubes\leq \delta^{2(1-d)-\beta}$. Then
\begin{equation}
\Big\Vert \Big(\sum_{T_1,\ldots,T_{d+1} \in\tubes} \chi_{T_1}\cdots\chi_{T_{d+1}} |u_1\wedge\ldots \wedge u_{d+1}|\Big)^{\frac{1}{d+1}}\Big\Vert_p\lesssim \delta^{\frac{1-d}{p'}}\Big(\sum_{T\in\tubes}|T|\Big)^{1/p},\quad p' = d+\beta.
\end{equation}
\end{lem}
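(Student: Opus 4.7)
The plan is to derive the lemma from Theorem \ref{multilinearKakeyaThm}, applied with $k=d+1$, by interpolating the resulting $L^{(d+1)/d}$ estimate against the trivial $L^\infty$ bound. Set
\[
F(x):=\sum_{T_1,\ldots,T_{d+1}\in\tubes}\chi_{T_1}(x)\cdots\chi_{T_{d+1}}(x)\,|u_1\wedge\ldots\wedge u_{d+1}|,
\]
so the target is an $L^p$ bound on $F^{1/(d+1)}$ with $p=(d+\beta)/(d+\beta-1)$. Theorem \ref{multilinearKakeyaThm} directly supplies
\[
\|F^{1/(d+1)}\|_q \lesssim \delta^{1-n/(d+1)}\sum_{T\in\tubes}|T|,\qquad q:=\tfrac{d+1}{d},
\]
and the pointwise inequality $F(x)\leq (\#\tubes)^{d+1}$ (using $\chi_{T_i}\in\{0,1\}$ and $|u_1\wedge\cdots\wedge u_{d+1}|\leq 1$) gives the trivial bound $\|F^{1/(d+1)}\|_\infty\leq \#\tubes$.

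Next, the assumption $\beta\leq 1$ is easily seen to be equivalent to $p\geq q$; when $\beta=1$ the two exponents coincide and no further interpolation is required. In general, H\"older's inequality yields
\[
\|F^{1/(d+1)}\|_p \leq \|F^{1/(d+1)}\|_q^{\,q/p}\,\|F^{1/(d+1)}\|_\infty^{\,1-q/p}.
\]
Substituting the two bounds above and using $\sum_{T\in\tubes}|T|\sim (\#\tubes)\,\delta^{n-1}$ reduces the claim to an identity in the exponents of $\delta$ and $\#\tubes$.

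In that final step I would invoke the hypothesis $\#\tubes\leq\delta^{2(1-d)-\beta}$ exactly once, converting the surviving power of $\#\tubes$ into a power of $\delta$. The numerology balances cleanly: the exponent $2(1-d)-\beta$ in the hypothesis is precisely what is needed so that the resulting power of $\delta$ equals $(1-d)/p'$, matching the target. The only step requiring genuine vigilance is this final arithmetic check, but the computation is short and relies only on the relations $1/p+1/p'=1$ and $p'=d+\beta$; I do not anticipate any real obstacle.
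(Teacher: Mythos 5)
Your proof is correct and is essentially the paper's argument: the paper also interpolates the $L^{(d+1)/d}$ bound from Theorem \ref{multilinearKakeyaThm} against the trivial pointwise bound $F\leq(\#\tubes)^{d+1}$ (it writes $\int F^{p/(d+1)}\leq(\#\tubes)^{p-(d+1)/d}\int F^{1/d}$, which is your H\"older step raised to the $p$-th power), and then invokes $\#\tubes\leq\delta^{2(1-d)-\beta}$ at the end. The exponent arithmetic does balance exactly as you predicted.
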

\begin{proof}
Note that $p - \frac{d+1}{d}\geq 0$, with equality if $\beta = 1$. We have
\begin{equation}
\begin{split}
\int \Big(&\sum_{T_1,\ldots,T_{d+1} \in\tubes} \chi_{T_1}\cdots\chi_{T_{d+1}} |u_1\wedge\ldots \wedge u_{d+1}|\Big)^{\frac{p}{d+1}}\\
&\leq (\#\tubes)^{p - \frac{d+1}{d}} \int \Big(\sum_{T_1,\ldots,T_{d+1} \in\tubes} \chi_{T_1}\cdots\chi_{T_{d+1}} |u_1\wedge\ldots \wedge u_{d+1}|\Big)^{\frac{1}{d}}\\
&\lesssim \Big(\delta^{1-n}\sum_{T\in\tubes}|T|\Big)^{p - \frac{d+1}{d}}\Big[ \delta^{\frac{d+1-n}{d}} \Big(\sum_{T\in\tubes}|T|\Big)^{\frac{d+1}{d}}\Big]\\
& = \delta^{1+(1-n)(p-1)}\Big(\sum_{T\in\tubes}|T|\Big)^{p-1}\Big(\sum_{T\in\tubes}|T|\Big)\\
& \leq \delta^{1+(1-n)(p-1)}\Big(\delta^{n-1}\delta^{2(1-d)-\beta}\Big)^{p-1}\Big(\sum_{T\in\tubes}|T|\Big)\\
& = \delta^{\frac{p(1-d)}{p'}}\Big(\sum_{T\in\tubes}|T|\Big).
\end{split}
\end{equation}
For the first inequality, we used the observation that the integrand is bounded by $(\#\tubes)^{d+1}$. The second inequality used Theorem \ref{multilinearKakeyaThm}. The next equality re-groups terms. The fifth line used the hypothesis on the cardinality of $\tubes$, while the final line simplifies the expression. 
\end{proof}

Combining \eqref{furtherDecomposition} and Lemma \ref{calculationLem}, we obtain the following; recall that $\tubes_\rho$ is a maximal collection of boundedly overlapping $\rho$-tubes. 
\begin{cor}\label{towardsInductionOnScalesCor}
Let $1\leq d<n$ be integers and let $\beta\in[0,1]$. Let $0<\delta\leq \rho/2\leq 1$, and let $\tubes$ be a set of $\delta$-tubes in $B(0,1)\subset \RR^n$, with $\#\tubes\leq \delta^{2(1-d)-\beta}$. Then
\begin{equation}\label{towardsInductionOnScalesEqn}
\Big\Vert \sum_{T\in\tubes}\chi_T\Big\Vert_p 
\lesssim \rho^{\frac{-d}{d+1}} \delta^{\frac{1-d}{p'}}\Big(\sum_{T\in\tubes}|T|\Big)^{1/p} + \rho^{\frac{1-d}{p'}}\bigg(\sum_{T_\rho\in\tubes_\rho}\Big\Vert \sum_{\substack{T\in\tubes\\ T\subset T_\rho }}\chi_T  \Big\Vert_p^p\bigg)^{1/p},\quad p' = d+\beta.
\end{equation}

\end{cor}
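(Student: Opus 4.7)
The corollary is a direct combination of the three preceding ingredients, so the plan is largely organizational: I apply \eqref{decomposeLpNorm} with $k=d+1$ to split $\|\sum_{T\in\tubes}\chi_T\|_p$ into a multilinear piece and a per-cap piece, bound the multilinear piece by Lemma \ref{calculationLem}, and rewrite the per-cap piece via \eqref{furtherDecomposition}.

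More concretely, I first set $k=d+1$ in \eqref{decomposeLpNorm}. The first prefactor becomes $\rho^{(1-(d+1))/(d+1)} = \rho^{-d/(d+1)}$ and the second becomes $\rho^{(2-(d+1))/p'} = \rho^{(1-d)/p'}$, which already matches the two exponents appearing in \eqref{towardsInductionOnScalesEqn}. So what remains is to bound the two factors these prefactors multiply.

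For the multilinear factor, I invoke Lemma \ref{calculationLem}, whose hypothesis $\#\tubes \leq \delta^{2(1-d)-\beta}$ is precisely the cardinality assumption of the corollary; this yields the desired $\delta^{(1-d)/p'}\bigl(\sum_T |T|\bigr)^{1/p}$ bound on the first term. For the per-cap factor, I apply \eqref{furtherDecomposition}, which requires $\delta<\rho/2$ (supplied by the hypothesis $\delta\leq\rho/2$), to pass from a sum over $\rho$-caps $\tau$ in $S^{n-1}$ to a sum over the $\rho$-tubes $T_\rho\in\tubes_\rho$; the boundedly-overlapping property of $\tubes_\rho$ absorbs the conversion constants into the $\lesssim$. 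Assembling the two bounds gives exactly \eqref{towardsInductionOnScalesEqn}.

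There is no real obstacle here beyond bookkeeping: one only has to verify that the exponents of $\rho$ and $\delta$ produced by specializing $k=d+1$ line up with those in the statement, and that the cardinality hypothesis on $\tubes$ is invoked at the right step (inside Lemma \ref{calculationLem}). The use of $p'=d+\beta$ is inherited from Lemma \ref{calculationLem}, and the condition $\delta\leq\rho/2$ is used only to justify the tube-decomposition \eqref{furtherDecomposition}.
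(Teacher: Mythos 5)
Your proof is correct and matches the paper's intended argument exactly: specialize \eqref{decomposeLpNorm} to $k=d+1$, control the multilinear term via Lemma~\ref{calculationLem} (using the cardinality hypothesis), and convert the per-cap sum to a per-$\rho$-tube sum via \eqref{furtherDecomposition}. The paper states this corollary as an immediate combination of these ingredients without writing out the details, and your exponent checks confirm the bookkeeping.
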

Using Inequality \ref{towardsInductionOnScalesEqn}, we can prove Theorem \ref{maximalFnEstimateTubes} using induction on scales. 

\begin{proof}[Proof of Theorem \ref{maximalFnEstimateTubes}]
In the arguments that follow, we will choose the (small, positive) quantities $\rho$ and $\delta_0,$ and then the (large) quantity $C$, in that order; $C$ is the quantity appearing in \eqref{LPBound}.

We will prove the theorem by induction on $\delta$. By choosing $C$ appropriately, we can suppose that \eqref{LPBound} holds whenever $\delta\geq\delta_0$. Suppose now that $\delta\in (0,\delta_0)$, and suppose furthermore that for all $\tilde\delta\in (\delta,1]$, we have that \eqref{LPBound} has been established for all collections $\tilde\tubes$ of $\tilde\delta$-tubes in $B(0,1)\subset\RR^n$ that satisfy the analogue of \eqref{nonConcentration} with $\tilde\delta$ in place of $\delta$. Let $\tubes$ be a set of $\delta$-tubes in $B(0,1)$ that satisfy \eqref{nonConcentration}. Our goal is to establish \eqref{LPBound} for this collection of tubes.

By Corollary \ref{towardsInductionOnScalesCor}, we have
\begin{equation}\label{boundingLpNormP}
\Big\Vert \sum_{T\in\tubes}\chi_T\Big\Vert_p^p  \lesssim  \rho^{\frac{-dp}{d+1}} \delta^{\frac{(1-d)p}{p'}}\Big(\sum_{T\in\tubes}|T|\Big) + \rho^{\frac{(1-d)p}{p'}}\sum_{T_\rho\in\tubes_\rho}\Big\Vert \sum_{\substack{T\in\tubes\\ T\subset T_\rho }}\chi_T  \Big\Vert_p^p.
\end{equation}
If $C=C(n,\eps)$ is chosen sufficiently large depending on $\rho$ (which in turn, will ultimately only depend on $n$ and $\eps$) and the implicit constant in \eqref{boundingLpNormP}, then \eqref{boundingLpNormP} may be re-written as
\begin{equation}\label{boundingLpNormPRd2}
\Big\Vert \sum_{T\in\tubes}\chi_T\Big\Vert_p^p  - \frac{1}{2}C^p\delta^{\frac{(1-d)p}{p'}}\Big(\sum_{T\in\tubes}|T|\Big)\lesssim 
\rho^{\frac{(1-d)p}{p'}}\sum_{T_\rho\in\tubes_\rho}\Big\Vert \sum_{\substack{T\in\tubes\\ T\subset T_\rho }}\chi_T  \Big\Vert_p^p.
\end{equation}
We must now bound the RHS of \eqref{boundingLpNormPRd2}. Let $T_\rho\in\tubes_\rho$. Let $A$ be the affine map that sends the coaxial line of $T_\rho$ to the $e_1$-axis, and dilates $T_\rho$ by a factor of $1/\rho$ in each of the directions $e_2,\ldots,e_n$. We can verify that the image of each of the tubes $\{T\in\tubes\colon T\subset T_\rho\}$ under this transformation is comparable to a $\delta/\rho$-tube contained in $B(0,1)$, and that this set of $\delta/\rho$-tubes satisfies the analogue of \eqref{nonConcentration} with $\delta/\rho$ in place of $\delta$ (the inequality \eqref{nonConcentration} might be weakened by a constant factor, but this is harmless and can be remedied, for example, by partitioning the set of $\delta/\rho$-tubes into $\sim 1$ families, applying the arguments below to each family in turn, and using the triangle inequality). 

Since $\delta/\rho > \delta$, we can apply the induction hypothesis to conclude that (after undoing the re-scaling)
\[
\Big\Vert \sum_{\substack{T\in\tubes\\ T\subset T_\rho }}\chi_T  \Big\Vert_p^p \lesssim  C^p(\delta/\rho)^{\frac{(1-d)p}{p'}-\eps p}\Big(\sum_{\substack{T\in\tubes\\ T\subset T_\rho }}|T|\Big).
\]
Thus
\begin{equation}\label{boundSecondTerm}
\begin{split}
\rho^{\frac{(1-d)p}{p'}}\sum_{T_\rho\in\tubes_\rho}\Big\Vert \sum_{\substack{T\in\tubes\\ T\subset T_\rho }}\chi_T  \Big\Vert_p^p&\lesssim  C^p \rho^{\frac{(1-d)p}{p'}}(\delta/\rho)^{\frac{(1-d)p}{p'}-\eps p}\sum_{T_\rho\in\tubes_\rho}\Big(\sum_{\substack{T\in\tubes\\ T\subset T_\rho }}|T|\Big)\\
&\lesssim  C^p \rho^{\eps p}\delta^{\frac{(1-d)p}{p'}-\eps p}\sum_{T\in\tubes}|T|.
\end{split}
\end{equation}
Selecting $\rho$ sufficiently small compared to $\eps,$ $p$, and the implicit constants in \eqref{boundingLpNormPRd2} and \eqref{boundSecondTerm} (recall that these constants depend only on $n$ and $p$; crucially, they do not depend on $\rho$ or $C$), we obtain \eqref{LPBound}, which closes the induction and completes the proof. 
\end{proof}


\section{Proof of Conjecture \ref{mainConj}}\label{mainConjProofSec}
We are now ready to prove Conjecture \ref{mainConj}. First, we will reduce the situation where $L$ is analytic, and in particular supports a Frostman measure of the appropriate dimension. A reduction of this type first appeared in \cite{HKM}; we briefly summarize those arguments here. 

For convenience, we will suppose that $\beta>0$ (so in particular $d<n$). This is harmless, since if $\beta=0$ then we can replace $d$ by $d-1$ and $\beta$ by 1. Next, note that it suffices to establish a variant of \eqref{dimensionBound}, where the RHS has been replaced by $d+\beta-\eps$ for arbitrary $\eps>0$. 

Fix such an $\eps$.  Let $X = \bigcup_{\ell\in L}\ell$, and let $X_1 \supset X$ be a $G_\delta$ set with $\dim X_1 = \dim X$; the construction of such a set is explained, for example, in \cite{Fr}. Since $\ell\subset X_1$ for each $\ell\in L$, if we select $R$ sufficiently large then the set $L_1 = \{\ell \in L\colon |\ell \cap X_1 \cap B(0,R)| > 1\}$ satisfies $\dim L_1 \geq \dim L - \eps/2.$

Abusing notation, we will replace the set $X_1$ and the lines in $L_1$ and $L$ with their images under the re-scaling $x\mapsto x/R$. We still have $\dim L_1 \geq 2(d-1)+\beta - \eps/2$, but now 
\begin{equation}\label{defnL1}
L_1 = \{\ell \in L\colon |\ell \cap B(0,1)| > R^{-1}\}.
\end{equation}
The set $X_2 = X_1 \cap B(0,1)$ is $G_\delta$ and bounded, and $\ell\cap X_2$ contains an interval of length $\geq 1/R$ for each $\ell\in L_1.$

Next, we recall the following special case of Lemma 3.1 from \cite{HKM}. In the statement below, $A(n,1)$ is the set of lines in $\RR^n$ and if $Z\subset\ell$, then $\mathcal{H}^1_\infty(Z)=\inf \sum|J|$, where the infimum is taken over all coverings of $Z$ by countable collections of open intervals $\{J\}$. Note that the analyticity of a set $M\subset A(1,n)$ depends on the metric (or more generally, the topology), and the authors in \cite{HKM} discuss their choice of metric in Remark 2.2.  The metric on $\mathcal{L}_n\subset A(n,1)$ described in Section \ref{introSection} is compatible with that from \cite{HKM}.
\begin{lem}\label{HKMLem}
Let $Y\subset\RR^n$ be $G_\delta$ and bounded, and let $c>0$. Then the set of lines 
\[
\big\{\ell \in A(n, 1)\colon \mathcal{H}^1_\infty(\ell \cap Y)>c\big\}
\]
is analytic. 
\end{lem}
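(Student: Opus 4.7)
The plan is to show that the set is in fact Borel, which is stronger than analytic. The key observation is that on a single line $\ell$ (identified isometrically with $\RR$), the content $\mathcal{H}^1_\infty$ coincides with one-dimensional Lebesgue outer measure, since both are defined as the infimum of $\sum |J|$ over countable covers by open intervals. Because $Y$ is Borel, $\ell \cap Y$ is Borel and hence Lebesgue measurable in $\ell$, so $\mathcal{H}^1_\infty(\ell \cap Y) = \mathcal{L}^1(\ell \cap Y)$, where $\mathcal{L}^1$ denotes one-dimensional Lebesgue measure on $\ell$.

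Next, since $Y$ is $G_\delta$ and bounded, I would write $Y = \bigcap_k U_k$ with $\{U_k\}$ a decreasing sequence of bounded open subsets of $\RR^n$. By continuity of Lebesgue measure from above for decreasing sequences of finite measure,
\[
\mathcal{H}^1_\infty(\ell \cap Y) = \mathcal{L}^1(\ell \cap Y) = \inf_k \mathcal{L}^1(\ell \cap U_k),
\]
so the condition $\mathcal{H}^1_\infty(\ell \cap Y) > c$ is equivalent to the existence of some $m \in \mathbb{N}$ with $\mathcal{L}^1(\ell \cap U_k) \geq c + 1/m$ for every $k$. Hence
\[
\big\{\ell \in A(n,1) : \mathcal{H}^1_\infty(\ell \cap Y) > c\big\} = \bigcup_m \bigcap_k \big\{\ell : \mathcal{L}^1(\ell \cap U_k) \geq c + 1/m\big\}.
\]

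The final step is to verify that for each bounded open $U \subset \RR^n$ the map $\ell \mapsto \mathcal{L}^1(\ell \cap U)$ is lower semi-continuous on $A(n,1)$. I would prove this by writing $\chi_U$ as the pointwise increasing limit of a sequence of non-negative continuous functions $\phi_j$ compactly supported in $U$: each line integral $\ell \mapsto \int_\ell \phi_j \, ds$ is continuous in $\ell$ by uniform continuity of $\phi_j$ and compactness of its support, and monotone convergence gives $\mathcal{L}^1(\ell \cap U) = \sup_j \int_\ell \phi_j \, ds$, a supremum of continuous functions, which is lower semi-continuous. Each level set $\{\ell : \mathcal{L}^1(\ell \cap U_k) \geq c + 1/m\}$ is then $G_\delta$, and the whole set is therefore Borel, hence analytic. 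The only delicate step is the lower semi-continuity of the line-length function, but it reduces to a routine continuity-and-convergence argument once a (local) continuous parametrization of lines by arc length is fixed.
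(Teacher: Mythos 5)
Your argument is correct, and in fact it establishes something formally stronger than what is claimed: the set is Borel (indeed $G_{\delta\sigma}$), not merely analytic. The paper does not prove this lemma itself — it simply quotes it as a special case of Lemma~3.1 of H\'era--Keleti--M\'ath\'e — so there is no ``paper proof'' to compare against line by line; but your self-contained argument is a legitimate alternative to citing their result. The two pillars of your approach are sound: (i) on a one-dimensional affine subspace, the content $\mathcal{H}^1_\infty$ as defined in the paper (infimum of $\sum|J|$ over covers by open intervals) is \emph{exactly} Lebesgue outer measure, so for the Borel set $\ell\cap Y$ it agrees with $\mathcal{L}^1(\ell\cap Y)$ — this identification is special to $k=1$ and would fail for $k$-dimensional Hausdorff content in $k$-planes with $k\geq 2$, where content and Lebesgue measure are only comparable, not equal; (ii) writing $Y=\bigcap_k U_k$ with $U_k$ bounded open decreasing (intersecting with a large ball containing $Y$ and taking partial intersections) and using continuity from above, so that the set in question becomes $\bigcup_m\bigcap_k\{\ell:\mathcal{L}^1(\ell\cap U_k)\geq c+1/m\}$. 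The lower semi-continuity of $\ell\mapsto\mathcal{L}^1(\ell\cap U)$ for $U$ bounded open, via a monotone approximation of $\chi_U$ by compactly supported continuous $\phi_j$ and continuity of $\ell\mapsto\int_\ell\phi_j\,ds$ in the metric on $\mathcal{L}_n$, is a routine check as you say; one should just be aware that the sign ambiguity $u\leftrightarrow -u$ in the parametrization is harmless because $\int_{\RR}\phi_j(x+tu)\,dt$ is invariant under $u\mapsto -u$. Each superlevel set $\{f\geq a\}$ of a lower semi-continuous $f$ is $G_\delta$, so the whole set is $G_{\delta\sigma}$ and hence Borel, hence analytic (the line space is Polish). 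No gaps.
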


Let $L_2=\{\ell \in A(n,1) \colon \mathcal{H}^1_\infty(\ell \cap X_2) \geq 1/R\}$. Then $L_2$ is analytic, and $L_2\supset L_1$ (indeed, $\mathcal{H}^1_\infty(\ell \cap X_2) \geq 1/R$ for each $\ell\in L_1$, since $\ell \cap X_2$ contains an interval of length $\geq 1/R$), so $\dim L_2 \geq  2(d-1)+\beta-\eps/2$. At this point, we have replaced our original set $X=\bigcup_{\ell \in L}\ell$ by a new set $X_2$ whose Hausdorff dimension is no larger than that of $X$, and we have replaced the original set of lines $L$ by a set $L_2$ that is analytic (in the metric space $\mathcal{L}_n$), with $ \dim L_2\geq 2(d-1)+\beta-\eps/2$. In particular, $L_2$ supports a Frostman probability measure $\mathbb{P}$ that satisfies the following ball condition: There is a constant $C_0$ so that for all $r>0$,
\begin{equation}\label{ballConditionOnP}
\mathbb{P}(B) \leq C_0 r^{2(d-1)+\beta - \eps}\quad\textrm{for all balls}\ B\subset\mathcal{L}_n\ \textrm{of radius}\ r.
\end{equation}

To prove Conjecture \ref{mainConj}, it suffices to establish the estimate
\begin{equation}\label{dimBoundX2}
\dim X_2 \geq d+\beta - C_1\eps,
\end{equation}
for some constant $C_1=C_1(n)>0$; we will prove \eqref{dimBoundX2} with $C_1 = 7n$. 

Armed with Theorem \ref{maximalFnEstimateTubes}, the proof of \eqref{dimBoundX2} is a straightforward discretization argument. Let $k_0$ be a large integer that will be chosen below. Cover $X_2$ by a union $\bigcup_{k=k_0}^\infty \bigcup_{B\in\mathcal{B}_k}B$, where $\mathcal{B}_k$ is a collection of balls of radius $2^{-k}$, and $\sum_k 2^{-k(\dim X_2 +\eps)}\#\mathcal{B}_k<\infty$. 

For each line $\ell\in L_2$, there is an index $k=k(\ell)\geq k_0$ so that 
\[
\Big|\ell \cap \bigcup_{B\in\mathcal{B}_k}B\Big|\geq \frac{1}{100Rk^2},
\] 
where $R$ is the constant from \eqref{defnL1}. For each index $k$, define $L^{(k)} = \{\ell\in L_2\colon k(\ell)=k\}$. Then $\sum_{k\geq k_0}\mathbb{P}(L^{(k)})\geq 1,$ so there is an index $k_1\geq k_0$ with $\mathbb{P}(L^{(k_1)})\geq \frac{1}{100k_1^2}$. Define $L_3=L^{(k_1)}$, define $\delta = 2^{-k_1}$, and define $E_{\delta}=\bigcup_{B\in\mathcal{B}_{k_1}} B$.

In what follows, we will write $A\lessapprox B$ to mean $A\lesssim |\log\delta|^{C}B$, where $C$ is an absolute constant (in practice we can take $C\leq 10$). In particular, we have $\mathbb{P}(L_3)\gtrapprox 1$, and $|\ell \cap E_\delta|\gtrapprox 1$ for each $\ell\in L_3$.  

By dyadic pigeonholing, we can select a $2\delta$-separated set $L_3^\delta\subset L_3$ and a number $A\geq C_0^{-1}\delta^{\eps}$ so that $(A^{-1}\delta^{2(1-d)-\beta})(\#L_3^\delta)\gtrapprox 1$, and for each $\ell\in L_3^\delta$ we have
\begin{equation}\label{measureOfDeltaBall}
A^{-1}\delta^{2(d-1)+\beta}\leq \mathbb{P}(B(\ell, \delta))<2  A^{-1}\delta^{2(d-1)+\beta}.
\end{equation}
Comparing \eqref{ballConditionOnP} and \eqref{measureOfDeltaBall}, we see that if $B\subset\mathcal{L}_n$ is a ball of radius $r\geq\delta$, then
\begin{equation}
\#(L_3^\delta \cap B) \leq C_0 \delta^{-\eps} A(r/\delta)^{2(d-1)+\beta}.
\end{equation}
Let $L_4^\delta\subset L_3^\delta$ be obtained by randomly and independently selecting each $\ell\in L_3^\delta$ with probability $C_0^{-1}\delta^{2\eps}A^{-1}$. We claim that if $\delta>0$ is sufficiently small (depending on $\eps$), then there exists such a random selection $L_4^\delta$ with
\begin{equation}\label{sideOfL5}
\#L_4^\delta \geq \frac{1}{2}C_0^{-1}\delta^{2\eps}A^{-1}\#L_3^\delta \geq \delta^{2(1-d)-\beta+4\eps},
\end{equation}
which satisfies the ball condition 
\begin{equation}\label{ballsForL5}
\#(L_4^\delta \cap B) \leq (r/\delta)^{2(d-1)+\beta}\quad\textrm{for all balls}\ B\subset\mathcal{L}_n.
\end{equation}
This is a standard random sampling argument. The key insight is that it suffices to establish \eqref{ballsForL5} (with $(r/\delta)^{2(d-1)+\beta}$ strengthened to $\frac{1}{2}(r/\delta)^{2(d-1)+\beta}$) for a boundedly-overlapping family of balls of radius $r$, for each dyadic $\delta\leq r\leq 1$. There are only about $\delta^{-n}$ such balls, and the probability that \eqref{ballsForL5} fails for any particular ball is exponentially small in $\delta$. The existence of an appropriate random sample then follows from the union bound. See e.g. \cite[Section 2]{PYZ} for details. 

Finally, for each $\ell\in L_4^\delta$, let $\tilde\ell\subset\ell$ be a unit line segment in $B(0,1)$ with $|\tilde\ell \cap E_\delta|\gtrapprox 1$, and let $T(\ell)$ be the $\delta$-neighborhood of $\tilde\ell$. Let $\tubes=\{T(\ell)\colon \ell\in L_4^\delta\}$. Then $\tubes$ is a collection of $\delta$-tubes in $B(0,1)$ that satisfies \eqref{nonConcentration}, and $|T\cap E_\delta|\gtrapprox |T|$ for each $T\in\tubes$. By \eqref{sideOfL5} and H\"older's inequality, have that for $p\in (1,\infty)$, 
\[
\delta^{n-1}\delta^{2(1-d)-\beta+4\eps}\lesssim \sum_{T\in\tubes}|T|\lessapprox \sum_{T\in\tubes}|T\cap E_\delta|\leq |E_\delta|^{1/p'}\Big\Vert \sum_{T\in\tubes}\chi_T\Big\Vert_p\lesssim \delta^{(n-\dim X_2-\eps)/p'}\Big\Vert \sum_{T\in\tubes}\chi_T\Big\Vert_p,
\]
i.e.
\begin{equation}\label{lowerBdMaximal}
\Big\Vert \sum_{T\in\tubes}\chi_T\Big\Vert_p \gtrapprox \delta^{5\eps} \delta^{n-2d + 1 - \beta}\delta^{-(n-\dim X_2)/p'}.
\end{equation}
On the other hand, by Theorem \ref{maximalFnEstimateTubes}, if $p = \frac{d+\beta}{d+\beta-1}$ then 
\begin{equation}\label{upperBdMaximal}
\Big\Vert \sum_{T\in\tubes}\chi_T\Big\Vert_p \leq C\delta^{\frac{1-d}{p'}+\frac{n+1 -2d - \beta}{p}-\eps}.
\end{equation}
Comparing \eqref{lowerBdMaximal} and \eqref{upperBdMaximal} and simplifying, we conclude that $\delta^{\frac{\dim X_2 - d - \beta}{p'}}\lessapprox \delta^{-6\eps}$. Thus if $k_0$ is selected sufficiently large (which forces $\delta$ to be sufficiently small), we conclude that $\dim X_2 \geq d + \beta - 7(d+1)\eps$, which is \eqref{dimBoundX2}.
\bibliographystyle{plain}
\bibliography{lines_bib}
\end{document}